\theoremstyle{plain}
\newtheorem{theorem}{Theorem}[section]
\newtheorem{lemma}[theorem]{Lemma}
\newtheorem{proposition}[theorem]{Proposition}
\theoremstyle{definition}
\newtheorem{definition}[theorem]{Definition}
\newtheorem{remark}[theorem]{Remark}
\newcommand{\HH}{\mathfrak{H}}
\newcommand{\KK}{\mathfrak{K}}
\newcommand{\RR}{\mathbb{R}}
\newcommand{\CC}{\mathbb{C}}
\newcommand{\norm}[1]{\left\lVert #1 \right\rVert}
\newcommand{\specp}{\sigma_{\mathrm{p}}}
\newcommand{\pv}{\text{p.v.}}
\title{\textbf{On singular points in the essential spectrum}}
\author{Alexander Plakhotnikov\footnote{Student. St. Petersburg State University. Faculty of Mathematics and Mechanics. \texttt{e-mail: st132770@student.spbu.ru}}}
\date{\today}
\begin{document}

\maketitle

\begin{abstract}
The paper investigates the existence of a limit in the operator norm for a family of operators $T_z(H)= F(H-z)^{-1}F^*$ for $z$ tending to the real axis. The conditions for the $H$ operator and the rigging operator $F$ are established, under which the limit exists. Special attention is paid to the separation of cases when the limit point belongs and does not belong to the point spectrum $H$.
\end{abstract}

\bigskip

{\bf 1. Introduction} \label{sec:introduction}
Consider a family of operators of the form $T_z(H) := F(H-z)^{-1}F^*$, where $H$ is a self—adjoint operator in a separable Hilbert space $\HH$, and $F:\HH\to\KK$ is the so-called rigging operator. This construction, often referred to as a sandwiched resolvent, finds applications in scattering theory (where $T_z$ is associated with the core of the scattering matrix), in the study of the spectral properties of perturbed operators (for example, in the Wigner-Weiss formalism \cite{Grubb1996}) and in the study of the local spectral properties of the operator $H$. From the boundary values of the resolvent on the real axis, i.e. from the limits of $T_{\lambda+iy}\to T_{\lambda+i0}$ at $y\downarrow 0$, information about the spectral density of the operator $H$ at the point $\lambda\in\RR$ can be extracted.

For the existence of the limit $T_{\lambda+i0}$ in the operator norm of the Hilbert space $\KK$, certain conditions are necessary both for the local structure of the spectrum of the operator $H$ near the point $\lambda$ and for the properties of the "equipping" operator $F$. In particular, in the classical theory of scattering by David Yafaev ~\cite{Yafaev1992}, it is shown that for Schrodinger-type operators with decreasing potentials, the existence of a limit $\lim_{y\downarrow 0}(H-z)^{-1}$ in a special locally weighted norm is related to the smoothness of the spectral measure.

This paper is devoted to the study of the conditions for the existence of the limit of the sandwiched resolvent $T_{\lambda+iy}$ in the operator norm of the space $\KK$ at $y\downarrow 0$. We consider the self-adjoint operator $H$ in $\HH$ and the bounded operator $F:\HH\to\KK$, defined in the spectral representation of $H$ through the multiplication operator by the weight function $w$ with a compact carrier. This requirement guarantees, in particular, the compactness of the operator $F(1+|H|)^{-s}$.

In this paper, we analyze cases where the limit point is $\lambda\in\RR$:

\begin{enumerate}
    \item does not belong to the point spectrum $\specp(H)$;
    \item is an eigenvalue of $\lambda\in\specp(H)$ of finite multiplicity.
\end{enumerate}

In Case I, we show that the existence of the limit $T_{\lambda+i0}$ in the operator norm $\KK$ is guaranteed if the spectral density $\rho$ of the operator $H$ and the weight function $w$ of the operator $F$ satisfy the local Holder condition at the point $\lambda$. In Case II, we prove that the presence of an eigenvalue of $\lambda$ with a nontrivial rigging operator (i.e., $F P_\lambda\neq 0$) leads to the divergence of the norm of $\|T_{\lambda+iy}\|$ as $O(1/y)$ for $y \downarrow 0$. In addition, it can be shown that the regularized part of the operator, which excludes the contribution of its own subspace, still has a limit.

The structure of this work is organized as follows. Section 2 provides preliminary definitions and establishes basic assumptions about the operators $H$ and $F$. Section 3 is devoted to proving the compactness property of the rigging operator. Section 4 analyzes the limit of $\lim_{y\downarrow 0} T_{\lambda+iy}$. The reader can find more detailed theoretical information in ~\cite{NA1,NA2,NA3,NA4,NA5,ReedSimon1}.

\bigskip

{\bf 2. Preliminary definitions.} Let $H$ be a self—adjoint operator in a separable Hilbert space $\HH$. We are interested in a family of operators depending on the complex parameter $z\in\CC\setminus\RR$:
\begin{equation}\label{eq:Tz}
T_z(H) := F(H-z)^{-1}F^*, \quad z=\lambda+iy, \ y \neq 0,
\end{equation}
where $F:\HH\to\KK$ is a restricted rigging operator. Our goal is to establish the conditions for the existence of the limit of this expression in the operator norm at $y\downarrow 0$ for a fixed $\lambda\in\RR$.

\begin{definition}
Let $\KK$ be a fixed separable Hilbert space. For concreteness, we can put $\KK := L^2(\RR,d\nu)$, where $\nu$ is some $\sigma$-finite Borel measure on $\RR$. 
\end{definition}

According to the spectral theorem, for $H$ there exists a unitary operator (spectral transformation) $\mathfrak{U}_H: \HH\to\bigoplus_{k=1}^{N}L^2(\RR,d\mu_k)$, where $N\in\mathbb{N} \cup\{\infty\}$, and $\{\mu_k\}$ is a set of measures that determine the spectral properties of $H$. For simplicity, we will consider the case of $N=1$, that is, $\mathfrak{U}_H:\HH\to L^2(\RR,d\mu_H)$. All arguments are generalized to the case of an arbitrary multiplicity of the spectrum.

\begin{definition}
Let $w:\RR\to (0,\infty)$ be a weight function satisfying the condition $w\in C_0(\RR)$ . We define the rigging operator $F:\HH\to\KK$ as a composition:
\begin{equation}\label{eq:F}
F := J \circ M_w \circ \mathfrak{U}_H,
\end{equation}
where:
\begin{enumerate}
    \item $\mathfrak{U}_H: \HH \to L^2(\RR, d\mu_H)$ is the spectral transformation for $H$.
    \item$M_w: L^2(\RR, d\mu_H) \to L^2(\RR, d\mu_H)$ is the multiplication operator for the function $w$,\\$(M_w f)(x) = w(x)f(x)$.
    \item$J: L^2(\RR,d\mu_H) \to\KK$ is a restricted operator, for example, an isometric embedding. 
\end{enumerate}
In the spectral representation, the operator $F$ acts on the vector $u\in\HH$ with the image $\widehat{u}=\mathfrak{U}_H u$ as $Fu\mapsto J(w\widehat{u})$.
\end{definition}

\bigskip

 {\bf 3. Compactness.} For further analysis, we need the compactness of the operator connecting $F$ with decreasing resolution at infinity.

\begin{lemma}
\label{lem:compactness}
Let the weight function be $w\in C_0(\RR)$. Then for any $s > 1/2$, the operator $F(1+|H|)^{-s}:\HH\to\KK$ is compact.
\end{lemma}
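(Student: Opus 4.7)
The plan is to transfer the problem to the spectral representation of $H$, where $(1+|H|)^{-s}$ becomes a concrete multiplication operator, and then to establish compactness by a finite-rank approximation exploiting both the compact support of $w$ and the integrability provided by the threshold $s>1/2$.

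First I would invoke the spectral theorem: the unitary $\mathfrak{U}_H$ intertwines $(1+|H|)^{-s}$ with multiplication by $(1+|x|)^{-s}$ on $L^2(\RR,d\mu_H)$. Combined with the factorisation $F=J\circ M_w\circ \mathfrak{U}_H$ from the definition of the rigging operator, this yields
\[
F(1+|H|)^{-s} \;=\; J\circ M_g\circ \mathfrak{U}_H,\qquad g(x):=w(x)(1+|x|)^{-s}.
\]
Since $\mathfrak{U}_H$ is unitary and $J$ is bounded, compactness of the composition is equivalent to compactness of $J\circ M_g:L^2(\RR,d\mu_H)\to\KK$. The function $g$ is continuous, supported in the compact set $K:=\supp w$, and $(1+|x|)^{-2s}$ is integrable on $\RR$ by the hypothesis $s>1/2$, so $g\in L^2(\RR,dx)\cap L^\infty(\RR)$.

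Second, I would approximate $g$ uniformly on $K$ by simple functions $g_n=\sum_{i=1}^{N_n}c_i^{(n)}\chi_{E_i^{(n)}}$, where $\{E_i^{(n)}\}$ is a finite Borel partition of $K$ whose cell diameters tend to $0$. Each $M_{g_n}$ decomposes $L^2(\RR,d\mu_H)$ into a direct sum over the cells $E_i^{(n)}$, and combining with the boundedness of $J$ gives a finite-rank (or at least Hilbert--Schmidt) approximant of $J\circ M_g$. Since $\|M_g-M_{g_n}\|\le\|g-g_n\|_\infty\to 0$ and the compact operators form a norm-closed subspace of the bounded ones, this will force $F(1+|H|)^{-s}$ to be compact.

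The main obstacle is the second step. Multiplication operators on $L^2$ over a generic continuous measure are not compact, so the passage from \emph{``$g$ bounded and of compact support''} to \emph{``$J\circ M_g$ compact''} must be underpinned by additional structure: either a direct Hilbert--Schmidt bound of the form $\|J\circ M_g\circ \mathfrak{U}_H\|_{\mathrm{HS}}^2\lesssim \int|g|^2\,d\mu_H<\infty$, which is exactly where the threshold $s>1/2$ enters, or a smoothing property of the embedding $J$. Making this estimate rigorous -- in particular identifying the sense in which the approximants $J\circ M_{g_n}\circ \mathfrak{U}_H$ are genuinely finite-rank (and not merely direct sums over cells of possibly infinite-dimensional subspaces) -- is the crux of the argument.
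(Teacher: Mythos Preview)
Your first step coincides with the paper's: both reduce $F(1+|H|)^{-s}$ to $J\circ M_g\circ\mathfrak{U}_H$ with $g(x)=w(x)(1+|x|)^{-s}\in C_0(\RR)$. From that point the paper simply asserts that $M_g$ is compact on $L^2(\RR,d\mu_H)$ whenever $g\in C_0(\RR)$, approximating $g$ uniformly by compactly supported $g_n$ and claiming the $M_{g_n}$ are finite rank ``or compact in the general case''. You take the same approximation route but flag the obstacle explicitly---and your skepticism is justified.

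The assertion that $M_g$ is compact is false in general. If $\mu_H$ has any continuous part on which $g$ does not vanish identically (for instance an absolutely continuous piece $d\mu_H=\rho\,dx$ on an interval inside $\supp w$), then $M_g$ is bounded below on an infinite-dimensional subspace and hence not compact; the approximants $M_{g_n}$ built from simple functions are, exactly as you suspect, direct sums of scalar multiples of infinite-rank projections. Since the paper allows $J$ to be an arbitrary bounded operator (``for example, an isometric embedding''), composing with $J$ cannot help: if $J$ is isometric then $J\circ M_g$ inherits the singular values of $M_g$. The threshold $s>1/2$, which you correctly tie to $\int_{\RR}(1+|x|)^{-2s}\,dx<\infty$, never actually enters either argument---for a bare multiplication operator only $\norm{g}_\infty$ is relevant, and the paper's proof would read identically for any $s>0$.

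So your proposal and the paper's proof are the same argument, and the gap you isolate is present in both. A complete proof requires an additional hypothesis not stated in the lemma: either that $\mu_H$ is purely discrete with atoms tending to infinity, or that $J$ itself carries a smoothing/Hilbert--Schmidt property from $L^2(\supp w,d\mu_H)$ into $\KK$.
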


\begin{proof}
Consider the operator $K := F(1+|H|)^{-s}$. Using the functional calculus for $H$, we can represent $K$ in a spectral representation. The operator $(1+|H|)^{-s}$ is unitarily equivalent to the multiplication operator by the function $(1+|x|)^{-s}$ in the space $L^2(\RR,d\mu_H)$. Thus,
\[
K =J \circ M_w \circ \mathfrak{U}_H \circ\mathfrak{U}_H^{-1} \circ M_{(1+|x|)^{-s}} \circ\mathfrak{U}_H =J\circ M_{w(x)(1+|x|)^{-s}} \circ\mathfrak{U}_H.
\]
We denote $g(x) = w(x)(1+|x|)^{-s}$. Since $w\in C_0(\RR)$, the function $g(x)$ is also continuous and $g(x)\to 0$ for $|x|\to \infty$, that is, $g\in C_0(\RR)$.

The multiplication operator $M_g$ by the function $g\in C_0(\RR)$ in $L^2(\RR,d\mu_H)$ is compact. This is a classic result: $g$ can be approximated in a uniform norm by functions with compact support $g_n\in C_0(\RR)$. The multiplication operators $M_{g_n}$ have finite rank (if the measure $\mu_H$ is finite on the carrier $g_n$) or are compact in the general case. Since $\|M_g - M_{g_n}\| = \|g - g_n\|_\infty\to 0$, the operator $M_g$ is the limit of compact operators and, therefore, is compact itself.

Since $K$ is a composition of the compact operator $M_g$ with bounded operators $J$ and $\mathfrak{U}_H$, it is also compact.
\end{proof}

\bigskip

{\bf 4. Limit analysis $\lim_{y\downarrow 0} T_{\lambda+iy}$.} We will use a standard technique, dividing the spectrum into "near" and "far" zones relative to the point $\lambda$. For $\varepsilon > 0$, we define:
\[
I_{\mathrm{near}} := (\lambda-\varepsilon, \lambda+\varepsilon), \qquad I_{\mathrm{far}} := \RR \setminus I_{\mathrm{near}}.
\]
Let's denote the corresponding spectral projectors by $E_{\mathrm{near}} := E_H(I_{\mathrm{near}})$ and $E_{\mathrm{far}} := E_H(I_{\mathrm{far}})$.

The resolvent can be decomposed into two parts:
\[
(H-z)^{-1} = (H-z)^{-1}E_{\mathrm{far}} + (H-z)^{-1}E_{\mathrm{near}}.
\]
On the far side, $\operatorname{Im}E_{\mathrm{far}}$, the resolvent is uniformly bounded by $y$ for $y\in(0,1)$:
\[
\norm{(H-z)^{-1}E_{\mathrm{far}}} = \sup_{x \in I_{\mathrm{far}}} \frac{1}{|x-\lambda-iy|} \le \frac{1}{\varepsilon}.
\]
Therefore, the operator $F(H-\lambda-iy)^{-1}E_{\mathrm{far}}F^*$ converges in the operator norm to $F(H-\lambda)^{-1}E_{\mathrm{far}}F^*$ for $y \downarrow 0$ by the dominated convergence theorem for operator integrals. The main difficulty lies in analyzing the contribution from $I_{\mathrm{near}}$.

\bigskip

{\bf 4.1. Case I: $\lambda\notin\specp(H)$ (no eigenvalue).} Assume that $\lambda$ is not an eigenvalue of $H$. For the existence of a limit in the operator norm, we need a condition for the regularity of the continuous spectrum $H$ in the vicinity of $\lambda$.

\begin{definition}
We will say that the spectral measure $\mu_H$ satisfies the \emph{H\"older condition} at a point $\lambda$ if it is absolutely continuous in some neighborhood of $\lambda$, $d\mu_H(x) = \rho(x)dx$, and its density $\rho(x)$ is locally H\"olderian at the point $\lambda$ with exponent $\alpha\in (0, 1]$. That is, $|\rho(x) - \rho(\lambda)| \le C|x-\lambda|^\alpha$.
\end{definition}

\begin{theorem}[see ~\cite{Yafaev1992}]
Let $\lambda\notin\specp(H)$. Assume that the spectral measure $\mu_H$ satisfies the H\"older condition at the point $\lambda$, and the weight function $w\in C_0(\RR)$ is also locally H\"olderic in $\lambda$. Then the limit
is \[
T_{\lambda+i0} := \lim_{y\downarrow0} F(H-\lambda-iy)^{-1}F^*
\]
exists in the operator norm.
\end{theorem}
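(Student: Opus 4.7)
The plan is to follow the near/far decomposition already set up in Section~4. The far part converges in operator norm by the argument given in the lead-in to the theorem, so I concentrate entirely on
\[
T_z^{\mathrm{near}} := F(H-z)^{-1}E_{\mathrm{near}}F^*.
\]
Transporting everything to the spectral representation $\mathfrak{U}_H$ and using $F = J M_w \mathfrak{U}_H$, this piece takes the form
\[
T_z^{\mathrm{near}} = J\,M_{k_z}\,J^*, \qquad k_z(x) = \frac{w(x)^2\,\chi_{I_{\mathrm{near}}}(x)}{x-\lambda-iy},
\]
where $M_{k_z}$ acts by multiplication on $L^2(\RR,d\mu_H)$. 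Since $\lambda\notin\specp(H)$, on $I_{\mathrm{near}}$ we have $d\mu_H(x)=\rho(x)\,dx$, and the H\"older hypotheses give $|w(x)^2\rho(x) - w(\lambda)^2\rho(\lambda)| \le C|x-\lambda|^\alpha$.

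Next I would freeze the symbol at $x=\lambda$: writing $w(x)^2\rho(x) = w(\lambda)^2\rho(\lambda) + r(x)$ with $|r(x)| \le C|x-\lambda|^\alpha$, the near part splits as $T_z^{\mathrm{near}} = S_z + R_z$, where
\[
S_z := w(\lambda)^2\rho(\lambda)\,J\,M_{\chi_{I_{\mathrm{near}}}/(x-\lambda-iy)}\,J^*
\]
carries the Cauchy singularity and $R_z$ has symbol proportional to $r(x)/[\rho(x)(x-\lambda-iy)]$. Using $|x-\lambda-iy|\ge|x-\lambda|$ together with the H\"older bound, the symbol of $R_z - R_{\lambda+i0}$ admits a pointwise majorant of the form $\min(Cy^\alpha|x-\lambda|^{-1}, C|x-\lambda|^{\alpha-1})$, which after sandwiching by $J$ and inserting the compact auxiliary factor $F(1+|H|)^{-s}$ from Lemma~\ref{lem:compactness} is controlled by dominated convergence, giving $R_z \to R_{\lambda+i0}$ in the operator norm of $\KK$.

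The remaining and principal task is to identify and control the limit of $S_z$. Pointwise, $1/(x-\lambda-iy)$ has no $L^\infty$-limit as $y\downarrow 0$, so $S_z$ cannot converge to a pure multiplication operator; instead the Plemelj--Sokhotski formula suggests the boundary value
\[
S_{\lambda+i0} = w(\lambda)^2\rho(\lambda)\,J\Bigl[\pv\,M_{\chi_{I_{\mathrm{near}}}/(x-\lambda)} + i\pi\,\Pi_\lambda\Bigr]J^*,
\]
where $\Pi_\lambda$ denotes the rank-one ``evaluation at $\lambda$'' functional, which becomes a bounded operator on $\KK$ only after the sandwich by the compact factor supplied by Lemma~\ref{lem:compactness}. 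To upgrade the weak/pointwise Plemelj--Sokhotski convergence to operator-norm convergence, I plan to insert a resolution $(1+|H|)^{s}(1+|H|)^{-s}$ on one side of $S_z$, factoring it as a compact operator times a bounded, H\"older-equicontinuous-in-$y$ family, and then apply an Arzel\`a--Ascoli / approximation argument to turn strong convergence of this equicontinuous family into norm convergence.

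The main obstacle is this last step: local H\"older regularity of $w^2\rho$ alone only delivers convergence against individual test vectors, whereas operator-norm convergence demands uniformity over the unit ball of $\KK$. The compactness furnished by Lemma~\ref{lem:compactness} is included precisely to supply this uniformity; the most delicate bookkeeping will be ensuring that the rank-one $\delta_\lambda$-type piece of the Plemelj--Sokhotski limit is absorbed by $J$ into a bounded operator on $\KK$, and that the principal-value part contributes no hidden logarithmic divergence after the compactness cut-off is applied.
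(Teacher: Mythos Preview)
Your overall strategy---near/far split, passage to the spectral representation, and invocation of the Plemelj--Sokhotski/Privalov limit for the near part---is exactly the route the paper takes; the paper's argument is labelled a ``Proof scheme'' and stops essentially where your detailed analysis begins. Your freeze-and-remainder decomposition and the proposed compactness upgrade are additions beyond the paper, and that is precisely where a genuine gap appears.

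The main problem is your reliance on Lemma~\ref{lem:compactness}. That lemma manufactures compactness of $F(1+|H|)^{-s}=J\,M_{w(x)(1+|x|)^{-s}}\,\mathfrak{U}_H$ solely from the decay of $w(x)(1+|x|)^{-s}$ as $|x|\to\infty$. On the bounded interval $I_{\mathrm{near}}$ the factor $(1+|x|)^{-s}$ is a bounded, nowhere-vanishing smooth function and supplies no local smoothing whatsoever; inserting $(1+|H|)^{s}(1+|H|)^{-s}$ therefore cannot tame the $1/(x-\lambda)$ singularity, cannot render the point-evaluation $\Pi_\lambda$ bounded after sandwiching, and cannot provide the equicontinuity your Arzel\`a--Ascoli step needs. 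A second, more technical slip: in your freeze you split $w(x)^2\rho(x)$, but $\rho$ sits in the measure $d\mu_H=\rho\,dx$, not in the multiplication symbol $k_z(x)=w(x)^2/(x-\lambda-iy)$; the remainder symbol you write, $r(x)/[\rho(x)(x-\lambda-iy)]$, has an unjustified $\rho$ in the denominator which is in general not controllable. What the paper actually leans on (without spelling it out) is the Privalov statement that boundary values of Cauchy integrals against H\"older densities exist and are controlled; any honest upgrade to \emph{operator-norm} convergence in $\KK$ must come from regularity or smoothing encoded in $J$ itself, not from the purely infrared compactness of Lemma~\ref{lem:compactness}.
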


\begin{proof}[Proof scheme]
As noted above, the contribution from $I_{\mathrm{far}}$ converges. Consider the contribution from $I_{\mathrm{near}}$. In the spectral representation, it corresponds to the operator $J\circ K_y\circ J^*$, where $K_y$ is an integral operator in $L^2(I_{\mathrm{near}}, d\mu_H)$ with the kernel
\[
K_y(x,x') =\frac{w(x)w(x')}{x'-\lambda-iy} \delta(x-x').
\]
The limit of this operator at $y\downarrow 0$ is related to the behavior of a Cauchy-type integral. According to the Privalov-Plemel theorem, for functions satisfying the H\"older condition, the limit exists and is expressed in terms of a singular integral (the main value according to Cauchy):
\[
\lim_{y\downarrow 0} \int_{I_{\mathrm{near}}} \frac{\phi(x)}{x-\lambda-iy} dx = \pv \int_{I_{\mathrm{near}}} \frac{\phi(x)}{x-\lambda} dx + i\pi \phi(\lambda).
\]
The H\"older conditions on $w$ and the density of $\rho$ ensure that the principal value operator is bounded. This allows us to prove convergence in the operator norm for contributions from $I_{\mathrm{near}}$. Combining both contributions, we obtain the existence of a limit for $T_{\lambda+iy}$.
\end{proof}

\bigskip

{\bf 4.2. Case II: $\lambda\in\specp(H)$ (eigenvalue).}

Now let $\lambda$ be an isolated eigenvalue of $H$ of finite multiplicity $n = \dim V_\lambda < \infty$, where $V_\lambda = \ker(H-\lambda I)$. Let $P_\lambda$ be an orthoprojector on $V_\lambda$.

In this case, the limit of the operator $T_z$ normally \textbf{does not exist}.

\begin{proposition}
If $\lambda\in \specp(H)$ and $F P_\lambda\neq 0$, then $\norm{T_{\lambda+iy}}\to\infty$ for $y\downarrow 0$.
\end{proposition}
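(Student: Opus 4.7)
The plan is to exploit the Riesz-type splitting of the resolvent at the isolated eigenvalue $\lambda$ in order to separate an explicit singular rank-$n$ contribution from a regular remainder. Writing $Q := I - P_\lambda$ and using that $P_\lambda$ commutes with $H$, we get
\[
(H-\lambda-iy)^{-1} \;=\; \frac{1}{-iy}\,P_\lambda \;+\; R_Q(y), \qquad R_Q(y):=(H-\lambda-iy)^{-1}Q,
\]
so that
\[
T_{\lambda+iy} \;=\; \frac{i}{y}\, F P_\lambda F^* \;+\; F\, R_Q(y)\, F^*.
\]

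First I would bound the remainder uniformly in $y$. Since $\lambda$ is isolated in $\spectrum(H)$, there exists $\delta>0$ with $\spectrum(H)\cap(\lambda-\delta,\lambda+\delta)=\{\lambda\}$. In the spectral representation the operator $R_Q(y)$ acts as multiplication by $\mathbf{1}_{\{x\ne\lambda\}}/(x-\lambda-iy)$, whose essential supremum is $\le 1/\delta$ for every $y>0$. Consequently $\norm{F R_Q(y) F^*}\le \norm{F}^2/\delta$, a bound independent of $y$.

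Next I would extract the divergent term. Because $P_\lambda$ is an orthogonal projection,
\[
F P_\lambda F^* \;=\; (F P_\lambda)(F P_\lambda)^*, \qquad \norm{F P_\lambda F^*} \;=\; \norm{F P_\lambda}^2,
\]
so the hypothesis $F P_\lambda \ne 0$ yields $c := \norm{F P_\lambda}^2 > 0$. The reverse triangle inequality then gives
\[
\norm{T_{\lambda+iy}} \;\ge\; \frac{c}{y} - \frac{\norm{F}^2}{\delta} \;\xrightarrow[y\downarrow 0]{}\; +\infty,
\]
which both proves the divergence and recovers the $\Omega(1/y)$ rate announced in the introduction.

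The only substantive step is the uniform bound on $R_Q(y)$, and this is exactly where the hypothesis that $\lambda$ is an isolated eigenvalue is used. Should one wish to drop isolation, the same conclusion can be reached by a softer quadratic-form argument: since $P_\lambda F^* = (F P_\lambda)^* \ne 0$, pick $\phi\in\KK$ with $P_\lambda F^*\phi\ne 0$ and apply the Borel transform $\int (x-\lambda-iy)^{-1}\,d\mu_{F^*\phi}(x)$ to bound $\ImOp\langle\phi,T_{\lambda+iy}\phi\rangle \ge \norm{P_\lambda F^*\phi}^2/y$ by isolating the atom of $\mu_{F^*\phi}$ at $\lambda$.
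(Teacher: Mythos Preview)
Your argument is correct and rests on the same resolvent splitting $T_{\lambda+iy}=\tfrac{i}{y}\,FP_\lambda F^*+F(I-P_\lambda)(H-\lambda-iy)^{-1}(I-P_\lambda)F^*$ that the paper uses. The only difference is how the lower bound on $\norm{T_{\lambda+iy}}$ is extracted: the paper asserts $\norm{T_{\lambda+iy}}\ge\tfrac{1}{y}\norm{FP_\lambda}^2$ directly from the $P_\lambda$-contribution---an inequality most cleanly justified by noting that both summands of $\ImOp T_{\lambda+iy}$ are nonnegative, so $\norm{T_{\lambda+iy}}\ge\norm{\ImOp T_{\lambda+iy}}\ge\tfrac{1}{y}\norm{FP_\lambda}^2$---whereas you instead bound the $Q$-piece uniformly by $\norm{F}^2/\delta$ using the spectral gap and then invoke the reverse triangle inequality. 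Your main route genuinely uses the isolation hypothesis but buys you the two-sided asymptotic $\norm{T_{\lambda+iy}}=c/y+O(1)$; the positivity/quadratic-form alternative you sketch at the end dispenses with the gap and recovers exactly the sharp lower bound $c/y$ that the paper states.
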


\begin{proof}
Decompose the space $\HH = V_\lambda \oplus V_\lambda^\perp$. The resolvent $(H-z)^{-1}$ splits into two parts. On $V_\lambda$, the $H$ operator acts as a multiplication by $\lambda$, so
\[
P_\lambda (H-z)^{-1} P_\lambda = P_\lambda (\lambda - z)^{-1} P_\lambda = \frac{1}{\lambda - (\lambda+iy)} P_\lambda = \frac{i}{y} P_\lambda.
\]
Consider the contribution of this term to $T_z$:
\[
F P_\lambda (H-z)^{-1} P_\lambda F^* =\frac{i}{y} (F P_\lambda) (F P_\lambda)^*.
\]
The operator $(F P_\lambda) (F P_\lambda)^*$ is a non—negative operator of rank no higher than $n$. If $F P_\lambda\neq 0$, its norm is strictly positive. Then
\[
\norm{T_{\lambda+iy}} \ge \norm{\frac{i}{y} (F P_\lambda) (F P_\lambda)^*} = \frac{1}{y} \norm{F P_\lambda}^2.
\]
Since $\norm{F P_\lambda} > 0$, the norm diverges at $y\downarrow 0$.
\end{proof}

Nevertheless, it is possible to prove the existence of a limit for the "regularized" part of the operator. Let $H_{\perp} = H\mid_{V_\lambda^\perp}$ and $F_{\perp} = F \mid_{V_\lambda^\perp}$.

\begin{theorem}
Let $\lambda\in\specp(H)$ be an isolated eigenvalue of finite multiplicity. Let $P_\lambda$ be a projector onto the corresponding proper subspace. Then the limit
is \[
\lim_{y\downarrow0} F(1-P_\lambda)(H-\lambda-iy)^{-1}(1-P_\lambda)F^*
\]
exists in the operator norm if the spectral measure $H$ in the vicinity of $\lambda$ satisfies the H\"older condition on $V_lambda^\perp$.
\end{theorem}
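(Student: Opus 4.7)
The plan is to reduce the statement to the theorem of Case~I (the Hölder theorem in Section~4.1), applied to the operators $H_\perp := H|_{V_\lambda^\perp}$ and $F_\perp := F|_{V_\lambda^\perp}$ on the reducing subspace $V_\lambda^\perp$. Since $V_\lambda$ is $H$-invariant and $H$ is self-adjoint, the projector $P_\lambda$ commutes with $H$ and hence with $(H-z)^{-1}$, so for every $y\neq 0$
\[
F(1-P_\lambda)(H-\lambda-iy)^{-1}(1-P_\lambda)F^{*} = F_\perp(H_\perp-\lambda-iy)^{-1}F_\perp^{*}.
\]
The problem is thereby reduced to proving the existence of the operator-norm limit of the right-hand side as $y\downarrow 0$.

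The next step is to identify the spectral data of the reduced pair so that Case~I can be quoted verbatim. Under the unitary $\mathfrak{U}_H$, the subspace $V_\lambda$ corresponds to the atomic part of $L^2(\RR,d\mu_H)$ supported at $\{\lambda\}$; consequently $\mathfrak{U}_H$ descends to a unitary equivalence $\mathfrak{U}_{H_\perp}:V_\lambda^\perp \to L^2(\RR,d\mu_{H_\perp})$, where $\mu_{H_\perp}$ is $\mu_H$ with its atom at $\lambda$ deleted. Because $1-P_\lambda$ is intertwined with the multiplication operator by $\chi_{\RR\setminus\{\lambda\}}$, which commutes with $M_w$, the operator $F_\perp$ takes exactly the rigging form $J\circ M_w\circ \mathfrak{U}_{H_\perp}$ of Definition~2.2 for the pair $(H_\perp,\mu_{H_\perp})$. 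All hypotheses of Case~I then hold at the point $\lambda$: the eigenspace has been stripped away, so $\lambda\notin\specp(H_\perp)$; the spectral measure $\mu_{H_\perp}$ is locally Hölder at $\lambda$ by assumption; and the weight $w$ is unchanged, so its local Hölder regularity at $\lambda$ is inherited from the original setup. Invoking Case~I yields the desired operator-norm convergence.

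The main obstacle — modest in principle but the only nontrivial point — is the unitary bookkeeping in the previous paragraph: one must verify carefully that the restriction of the rigging operator to $V_\lambda^\perp$ genuinely belongs to the class treated in Case~I on the reduced spectral space. This rests on the standard spectral-theoretic identity $P_\lambda = \mathfrak{U}_H^{-1}M_{\chi_{\{\lambda\}}}\mathfrak{U}_H$ together with the orthogonal decomposition of $L^2(\RR,d\mu_H)$ into its atom at $\lambda$ and its complement; once these are written out, both sides of the above identity for the resolvent truly refer to the same operator on $V_\lambda^\perp$, and the rest of the proof is a direct application of Case~I. I note, finally, that if $\lambda$ is additionally isolated from the remainder of $\spectrum(H)$, then $\lambda\notin\spectrum(H_\perp)$, so $(H_\perp-\lambda)^{-1}$ is bounded and the Hölder hypothesis becomes vacuous; the genuine content of the theorem lies in the embedded case, where $\lambda$ remains in the continuous spectrum after projection.
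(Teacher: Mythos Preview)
Your proposal is correct and follows essentially the same approach as the paper: both reduce the problem to Case~I by restricting to $V_\lambda^\perp$ and applying the H\"older theorem to the pair $(H_\perp, F_\perp)$. Your version is more careful about the spectral bookkeeping (verifying that $F_\perp$ retains the rigging form and that $\mu_{H_\perp}$ is $\mu_H$ minus its atom at $\lambda$), but the underlying strategy is identical to the paper's brief reduction argument.
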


\begin{proof}
Operator $T_z^{\perp} := F(1-P_\lambda)(H-z)^{-1}(1-P_\lambda)F^*$ can be considered as a sandwiched resolvent for the operator $H_{\perp}$ in the space $\HH_{\perp} = V_\lambda^\perp$. By construction, $\lambda$ is not an eigenvalue for $H_{\perp}$. Thus, we have reduced the problem to Case I. If there is a continuous spectrum of $H$ (which coincides with the spectrum of $H_{\perp}$) satisfies the H\"older condition at the point $\lambda$, then by Theorem 4.2 the limit is $T_{\lambda+i0}^{\perp}$ exists.
\end{proof}

\begin{remark}
The divergence associated with the eigenvalue has a clear physical meaning. It corresponds to the resonant response of the system at a frequency that matches its natural frequency. Applications often do not study the limit itself, but the asymptotic of the behavior of $T_{\lambda+iy}$ or its imaginary part (which is related to the spectral density according to the Stone formula).
\end{remark}

\end{document}